\documentclass[11pt,a4paper]{article}
\usepackage{amsmath,amsthm,color,eepic,xypic}
\usepackage{enumerate}
\usepackage{amssymb} 
\usepackage{verbatim}
\usepackage{hyperref}


\setlength{\textwidth}{172mm} \setlength{\oddsidemargin}{-5mm}
\setlength{\evensidemargin}{7mm} \setlength{\topmargin}{-20mm}
\setlength{\textheight}{245mm}

\theoremstyle{theorem}
\newtheorem{theorem}{Theorem}
\newtheorem{proposition}{Proposition}

\newtheorem{corollary}{Corollary}

\newtheorem{example}{Example}
\newtheorem{question}{Question}

\theoremstyle{definition}

\newtheorem{claim}{Claim}

\newtheorem{remark}{Remark}

\def\F{\mathcal{F}}

\baselineskip 15pt

\begin{document}

\title{\bf Fractional cocoloring of graphs\thanks{Gimbel and K\"undgen were supported
by the ERC Advanced Grant GRACOL, project no. 320812.}}

\date{}

\author{John Gimbel, Andr\'e K\"undgen, Michael Molloy 
\\[2mm]
} \maketitle

\begin{center}
\begin{minipage}{140mm}
\small\noindent{\bf Abstract:} 
The cochromatic number $Z(G)$ of a graph $G$ is the fewest number of colors needed to color the vertices of $G$ so that each color class is a clique or an independent set. 
In a fractional cocoloring of $G$  a non-negative weight is assigned to each clique and independent set so that for each vertex $v$, the sum of the weights of all cliques and independent sets containing $v$ is at least one. The smallest total weight of such a fractional cocoloring of $G$ is the fractional cochromatic number $Z_f(G)$. 

In this paper we prove results for the fractional cochromatic number $Z_f(G)$ that parallel results for $Z(G)$ and the well studied fractional chromatic number $\chi_f{(G)}$. For example $Z_f(G)=\chi_f(G)$ when $G$ is triangle-free, except when the only nontrivial component of $G$ is a star. More generally, if $G$ contains no $k$-clique, then $Z_f(G)\le \chi_f(G)\le Z_f(G)+R(k,k)$.
Moreover, every graph $G$ with $\chi_f(G)=m$ contains a subgraph $H$ with $Z_f(H)\ge (\frac 14 - o(1))\frac m{\log_2 m}$. 
We also prove that the maximum value of $Z_f(G)$ over all graphs $G$ of order $n$ is $\Theta (n/\log n)$, and the maximum over all graphs embedded on an orientable surface of genus $g$ is $\Theta(\sqrt g / \log g)$.
\smallskip
\noindent{\bf Keywords: Fractional coloring, cocoloring} 
\end{minipage}
\end{center}
\section{Introduction}

The fractional chromatic number $\chi_f(G)$ of a graph $G$ is {a} natural relaxation of the chromatic number $\chi(G)$ with the advantage that $\chi_f(G)$ is a lower bound for $\chi(G)$ that can be investigated by linear programming methods. The book of Scheinerman and Ullman~\cite{SU} gives a good introduction to fractional coloring and other fractional generalizations in graph theory. In this paper we initiate the study of the fractional generalization $Z_f(G)$ of the cochromatic number $Z(G)$ of a graph $G$.

The {\em cochromatic number} $Z(G)$ of a graph $G$ is the minimum number of independent sets and cliques needed to cover the vertices of $G$. This parameter was introduced in 1977 by Lesniak and Straight~\cite{LS} as a way of generalizing split graphs. Since then, many interesting papers have investigated cocoloring problems~\cite{AE, AKS, BB, CKSS, Chu, CS, EG, EGK, EGS, Gim, HKLRS, Jor, Ou, S1, S2}. 
Let $G^c$ denote the complement of a graph $G$. Clearly, $Z(G^c)=Z(G)$ and $Z(G)\le\min\{\chi(G),\chi(G^c)\}$. It follows that for large complete graphs, $Z$ and $\chi$ are very far apart. The next two results show that for certain families of graphs, they are equal. The first was originally presented in Lesniak and Straight~\cite{LS}.

\begin{theorem}\label{Thm A} 
If $G$ is a triangle-free graph and $G\neq K_2$, then $Z(G)=\chi(G)$.
\end{theorem}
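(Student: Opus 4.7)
The inequality $Z(G) \le \chi(G)$ is immediate from the definitions, so the task is to establish $\chi(G) \le Z(G)$ whenever $G$ is triangle-free and $G \ne K_2$. Fix a cocoloring of $G$ with $k := Z(G)$ classes that, among all such optimal cocolorings, uses the fewest possible clique classes. Since $G$ is triangle-free, every clique of $G$ has at most two vertices, and a singleton is also an independent set, so without loss of generality each clique class is an edge $K_2$. It therefore suffices to prove that the chosen cocoloring contains no edge class at all, for then it is a proper $k$-coloring and $\chi(G) \le k$.

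Suppose for contradiction that $\{u,v\}$ is an edge class, and first assume some other class $I$ of the cocoloring is an independent set. Set $N_u := I \cap N(u)$, and consider replacing the two classes $\{u,v\}$ and $I$ by
\[
I_1 := (I \setminus N_u) \cup \{u\}, \qquad I_2 := N_u \cup \{v\}.
\]
The set $I_1$ is independent because $I \setminus N_u \subseteq I$ is independent and $u$ has no neighbor in $I \setminus N_u$ by the very definition of $N_u$. For $I_2$, the triangle-free hypothesis forces $N(u) \cap N(v) = \emptyset$, so $v$ is non-adjacent to every vertex of $N_u \subseteq N(u)$; together with $N_u \subseteq I$ being independent this shows $I_2$ is independent. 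The modified cocoloring still has $k$ classes but one fewer edge class, contradicting the choice of cocoloring.

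The alternative is that every class other than $\{u,v\}$ is also an edge, so $G = kK_2$. If $k = 1$ then $G = K_2$, which is excluded. If $k \ge 2$, then taking one endpoint from each edge gives a partition of $V(G)$ into two independent sets, so $Z(G) \le 2$; combined with $Z(G) = k$ this forces $k = 2$, and the same two-independent-set cocoloring has no edge classes, again contradicting the minimality in our choice. The key step, and the only place triangle-freeness is used, is the swap in the middle paragraph, which pivots on the disjointness $N(u) \cap N(v) = \emptyset$; everything else is bookkeeping.
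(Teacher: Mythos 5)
The proof strategy — take a minimum cocoloring with the fewest clique classes and show by exchange that it has no edge class — is exactly the approach the paper alludes to (``cocolorings of minimum order which use the fewest number of cliques in the cover''). Your main exchange (replacing $\{u,v\}$ and an independent class $I$ by $I_1$ and $I_2$) is correct and cleanly written.

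However, the final paragraph has a genuine gap. From ``every class other than $\{u,v\}$ is also an edge'' you conclude ``so $G = kK_2$.'' That does not follow: the $k$ classes form a perfect matching of $G$, but $G$ may have additional edges between different classes (e.g.\ $G = C_4$ with matching classes $\{1,2\}, \{3,4\}$). Consequently the next step, ``taking one endpoint from each edge gives a partition of $V(G)$ into two independent sets,'' is unjustified for a general triangle-free $G$ with a perfect matching; it relies on there being no cross-edges. The case is easy to repair in the same spirit as your first swap: given two edge classes $\{u,v\}$ and $\{a,b\}$, triangle-freeness forces the bipartite graph between $\{u,v\}$ and $\{a,b\}$ to be a (possibly empty) matching, so one of the pairings $\{u,a\},\{v,b\}$ or $\{u,b\},\{v,a\}$ consists of two independent sets, and replacing the two edge classes by these reduces the number of clique classes, contradicting minimality. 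Hence if all classes are edges then $k\le 1$, i.e.\ $G = K_2$ or $G$ is empty. With that replacement the proof is complete and matches the paper's outlined method.
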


Let $kG$ denote the union of $k$ disjoint copies of a graph $G$ and $\omega(G)$ denote the maximum size of a clique in $G$. A second remark of relevance was
shown in~\cite{EGS}.

\begin{theorem}\label{Thm B} 
If $k\ge \omega(G)$ then $Z(kG)=\chi(kG)=\chi(G)$.
\end{theorem}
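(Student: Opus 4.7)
The plan is straightforward. Since $kG$ is a disjoint union of $k$ copies of $G$, we immediately have $\chi(kG)=\chi(G)$, and hence $Z(kG)\le\chi(kG)=\chi(G)$. The content of the theorem is the reverse inequality $Z(kG)\ge\chi(G)$, which I would prove by taking any cocoloring of $kG$ and converting it into a proper coloring of a single copy of $G$ that uses no more colors.

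So fix a cocoloring of $kG$ with $s$ cliques $C_1,\dots,C_s$ and $r$ independent sets $I_1,\dots,I_r$, and let $t=s+r$ be its total size. The key structural observation is that $kG$ has no edges between distinct copies of $G$, so each $C_i$ lies inside a single copy, and in particular $|C_i|\le\omega(G)$. The argument now splits by whether the cliques reach every copy.

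\emph{Case 1.} Some copy $G_j$ is disjoint from every $C_i$. Then $V(G_j)$ is covered entirely by $I_1,\dots,I_r$, and each $I_\ell\cap V(G_j)$ is independent in $G$, so this yields a proper $r$-coloring of $G$; hence $\chi(G)\le r\le t$.

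\emph{Case 2.} Every copy of $G$ contains a clique of the cocoloring. Then $s\ge k\ge \omega(G)$. Pick any copy $G_1$ and partition its vertex set as $V(G_1)=U\cup V$, where $U$ is covered by the $C_i$ and $V$ by the $I_\ell$. The restrictions of $I_1,\dots,I_r$ to $V$ give a proper coloring of $G_1[V]$ with at most $r$ colors. Meanwhile $G_1[U]$ is a vertex-disjoint union of cliques each of size at most $\omega(G)$, so $\chi(G_1[U])\le\omega(G)\le s$, and we can color $U$ with $\omega(G)$ \emph{fresh} colors disjoint from those used on $V$. Combining gives $\chi(G)\le r+\omega(G)\le r+s=t$.

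Both cases give $\chi(G)\le t=Z(kG)$, which completes the proof. The one place where the hypothesis $k\ge\omega(G)$ is really used is in Case~2, to guarantee $s\ge\omega(G)$ so that the fresh palette for $U$ fits inside the budget $t$; once the case distinction is set up correctly, the rest is bookkeeping, and I do not foresee any genuine obstacle.
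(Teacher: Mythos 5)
Your proof is correct, and it uses the same essential insight that the paper points to when it says that Theorems~\ref{Thm A} and~\ref{Thm B} follow by ``considering cocolorings of minimum order which use the fewest number of cliques in the cover'': since edges do not cross between copies, every clique in a cocoloring of $kG$ is confined to a single copy, and when $k\ge\omega(G)$ those cliques can be traded off against $\omega(G)\le k\le s$ colors. Your presentation is a direct counting argument (any cocoloring of $kG$ has size at least $\chi(G)$, split according to whether some copy avoids all cliques), whereas the cited approach is phrased as an extremal one (a minimum cocoloring with fewest cliques uses none), but these amount to the same idea; the only small point worth making explicit is that a cocoloring is a cover rather than a partition, so in Case~2 one should first disjointify the cliques (replacing $C_i$ by $C_i\setminus\bigcup_{j<i}C_j$, still cliques of size at most $\omega(G)$) before concluding $\chi(G_1[U])\le\omega(G)$.
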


Theorem~\ref{Thm B} shows that for families of graphs that are closed under disjoint union (such as planar graphs, graphs of bounded maximum degree, $d$-degenerate graphs, graphs of restricted girth, or any other family of graphs with a forbidden connected subgraph characterization) the maximum value of $Z$ achieved on the family is the same as that for $\chi$. For example $4K_4$ shows that the maximum value $Z(G)$ can achieve for planar graphs is 4.
 
The proofs of Theorems~\ref{Thm A} and \ref{Thm B} are similar to each other and not difficult if we consider
cocolorings of minimum order which use the fewest number of cliques in the cover. In the next section we will show that the fractional extensions of these two results hold with a few exceptions.

Erd{\H o}s, Gimbel and Straight~\cite{EGS} showed that $\chi(G)\le Z(G)+4^{\omega(G)+1}$.
In the last section we prove an extension of this to fractional cocoloring.

\section{Fractional coloring and fractional cocoloring}

A {\em fractional coloring} of a graph $G$ is an assignment of nonnegative real numbers to each independent set in $G$ so that for any fixed vertex $v$, the assignments on all independent sets containing $v$ sum to at least one.
Similarly, a {\em fractional cocoloring} of $G$ is an assignment of nonnegative real numbers to each
clique and independent set in $G$ so that for any fixed vertex $v$, the assignments on all cliques
and independent sets containing $v$ sum to at least one. 
The {\em weight} of a fractional coloring (or cocoloring) is the total sum of all the assigned numbers. The {\em fractional
cochromatic number} $Z_f(G)$ (respectively {\em fractional chromatic number} $\chi_f(G)$) of a graph $G$, is the minimum such sum taken over all fractional cocolorings (respectively fractional colorings).

The Strong Duality Principle originated by von Neuman and first published with a rigorous proof by Gale, Kuhn and Tucker~\cite{GKT}, gives alternative ways to
conceptualize both fractional colorings and fractional cocolorings. A {\em labeling} is an
assignment of nonnegative numbers to each vertex. A {\em color labeling} is an assignment
where the labels on each independent set sum to at most one. A {\em cocolor labeling} is an
assignment where the sum across each clique and independent set is at most one.
Given $g:V(G)\to[0,\infty)$ let us say the weight of $g$, denoted by $w(g)$, is the sum $\sum_{v\in V(G)}g(v)$. It is easy to see that for any cocolor labeling $g$ of a graph $G$, $w(g)\le Z_f(G)$. In fact, by
the Strong Duality Principle, the fractional cochromatic number of $G$ equals the maximum weight of a cocolor labeling of $G$ (just like the fractional chromatic number of $G$ equals the maximum weight of a color labeling of $G$). 

This connection makes it easy to verify $Z_f(G)$ for specific graphs $G$.

\begin{example}\label{Exm C} 
If  $G=K_{1,t}$ with $t\ge 1$, then $Z_f(G)=2-\frac 1t$. The lower bound follows from the cocolor labeling in which the center of the star receives $1-\frac 1t$ and each leaf $\frac 1t$. The upper bound follows from a cover where each edge receives weight $\frac 1t$, and the set of all leaves weight $1-\frac 1t$.

If $G=K_{1,t}\cup K_s^c$, the $t$-star along with $s$ isolated vertices, where $s,t\ge 1$, then $Z_f(G)=2-\frac1{t+1}$. The lower bound follows from the cocolor labeling in which the center receives $1-\frac 1{t+1}$ and each leaf as well as one other vertex $\frac 1{t+1}$. The upper bound follows from a cover where each edge receives weight $\frac1{t+1}$, and the two maximal independent sets receive weights $1-\frac 1{t+1}$ and $\frac 1{t+1}$.
\end{example}

General results for the fractional chromatic number whose proof is based on duality carry over similarly.
For example if $\alpha(G)$ denotes the size of a maximum independent set in a graph $G$ on $n$ vertices, then it is easy to see that $\chi_f(G)\ge n/\alpha(G)$. Moreover, equality holds if $G$ is vertex transitive (see page 42 in~\cite{SU}.) Correspondingly, we obtain   

\begin{proposition}\label{trivial Zf}
If $G$ is a graph on $n$ vertices with $k=\max\{\alpha(G),\omega(G)\}$, then $Z_f(G)\ge n/k$.
Equality holds, for example, when $G$ is vertex-transitive.
\end{proposition}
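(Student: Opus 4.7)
The plan for the lower bound is a standard counting argument. Take any fractional cocoloring of $G$, assigning weight $w(S)\ge 0$ to each clique and each independent set $S$ of $G$. Every such $S$ satisfies $|S|\le \max\{\alpha(G),\omega(G)\}=k$. Summing the covering inequality $\sum_{S\ni v}w(S)\ge 1$ over all vertices $v$ and exchanging the order of summation gives
\[
n\le \sum_v\sum_{S\ni v}w(S)=\sum_S |S|\,w(S)\le k\sum_S w(S),
\]
so $\sum_S w(S)\ge n/k$. Taking the infimum over all fractional cocolorings yields $Z_f(G)\ge n/k$.

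For the equality statement, the natural approach is the standard vertex-transitive averaging argument. Suppose $k=\alpha(G)$ (the case $k=\omega(G)$ is symmetric, using cliques in place of independent sets). Let $\mathcal I$ be the collection of independent sets of size exactly $\alpha(G)$. Since $\operatorname{Aut}(G)$ acts transitively on $V(G)$ and permutes $\mathcal I$, every vertex lies in the same number $N$ of members of $\mathcal I$; double-counting incidences gives $N\cdot n=\alpha(G)\cdot |\mathcal I|$. Assigning weight $1/N$ to each $I\in\mathcal I$ and weight $0$ to all other sets produces a valid fractional cocoloring of total weight $|\mathcal I|/N=n/\alpha(G)=n/k$, matching the lower bound.

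There is no real obstacle here beyond verifying that the construction covers every vertex exactly once in the averaged sense, which is immediate from vertex-transitivity. One may alternatively phrase the upper bound via the dual: the uniform labeling $g(v)=1/k$ is a cocolor labeling (every clique and every independent set has size at most $k$), witnessing $Z_f(G)\ge n/k$ from the other side of duality, and in the vertex-transitive case this labeling is automatically optimal by the symmetry argument above.
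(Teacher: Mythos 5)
Your proof is correct and matches the paper's in essence: for the lower bound you give the primal double-counting form, while the paper states it via the dual cocolor labeling $g(v)=1/k$, which you also note as an alternative---these are two phrasings of the same LP bound. The equality argument for vertex-transitive graphs (assigning uniform weight $1/N$ to the maximum independent sets, with the clique case handled by symmetry/complementation) is identical to the paper's.
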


\begin{proof}
The inequality follows from the cocolor labeling in which every vertex has weight $1/k$.
Now suppose that $G$ is vertex-transitive, and suppose that $k=\alpha(G)$. Thus every vertex lies in the same number of independent sets of size $k$; let $\ell$ denote this number. Equality is obtained by assigning a weight of $1/\ell$ to every  independent set of size $k$. The case
$k=\omega(G)$ follows from a nearly identical argument, or by applying the previous case to the complement of $G$.
\end{proof}

Strong duality also yields a fractional counterpart to Theorem~\ref{Thm B}.

\begin{theorem}\label{Thm E} 
Suppose $k$ is at least the clique number of $G$. 
Then $Z_f(kG)=\chi_f(kG)=\chi_f(G).$
\end{theorem}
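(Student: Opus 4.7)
The plan is to establish the two equalities $Z_f(kG)=\chi_f(kG)$ and $\chi_f(kG)=\chi_f(G)$ separately. The second is the classical observation that disjoint copies do not raise the fractional chromatic number, and the first is where the hypothesis $k\ge\omega(G)$ enters, via the strong-duality description of $Z_f$ recalled in the paper. For $\chi_f(kG)=\chi_f(G)$ the direction $\chi_f(G)\le\chi_f(kG)$ is immediate from $G\subseteq kG$, and for the reverse I would take an optimal fractional coloring of $G$ that assigns weight $w_S$ to each independent set $S$ of $G$, and lift it to $kG$ by giving weight $w_S$ to the ``diagonal'' independent set obtained by picking the copy of $S$ in each of the $k$ components; every vertex of $kG$ is still covered to weight at least $1$, and the total weight is unchanged.

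For $Z_f(kG)=\chi_f(kG)$, the inequality $Z_f(kG)\le\chi_f(kG)$ is immediate because any fractional coloring is also a fractional cocoloring. For the other direction I would invoke strong duality: it suffices to produce a cocolor labeling of $kG$ of weight $\chi_f(G)$. Start from an optimal color labeling $g\colon V(G)\to[0,\infty)$, so that $w(g)=\chi_f(G)$ and $\sum_{v\in S}g(v)\le 1$ for every independent $S\subseteq V(G)$; in particular $g(v)\le 1$ for every vertex $v$. Define $h$ on $V(kG)$ by copying $g$ into each of the $k$ components and scaling by $1/k$, so $w(h)=k\cdot w(g)/k=\chi_f(G)$.

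It remains to check that $h$ is a cocolor labeling. Any independent set in $kG$ decomposes along components into independent sets $S_1,\ldots,S_k$ of $G$, and $\sum_i\sum_{v\in S_i}h(v)\le\sum_i 1/k=1$. Any clique in $kG$ lies inside a single component (the components are disconnected), corresponds to a clique $C'$ of $G$ with $|C'|\le\omega(G)$, and so contributes $\sum_{u\in C'}g(u)/k\le\omega(G)/k\le 1$, using both the per-vertex bound $g\le 1$ and the hypothesis $k\ge\omega(G)$. The only real point of the argument is this clique check: if $k$ were smaller than $\omega(G)$, the factor $1/k$ would be too weak to absorb an $\omega(G)$-clique, so the proof would break exactly where Theorem~\ref{Thm B} breaks in the integral setting.
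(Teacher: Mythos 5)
Your proof is correct and takes essentially the same approach as the paper: both pass to strong duality, construct a copy-symmetric cocolor labeling of $kG$, and verify the clique constraint using $k\ge\omega(G)$. The only cosmetic difference is that the paper obtains this labeling by averaging an optimal color labeling of $kG$ over the $k$ copies of each vertex, whereas you build it directly by replicating an optimal color labeling of $G$ scaled by $1/k$ --- these yield the same labeling, and your direct bound on the clique sum ($\le\omega(G)/k\le1$, using $g(v)\le1$ per vertex) is an equivalent rephrasing of the paper's pigeonhole/contradiction check.
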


\begin{proof} The second equality is trivial. Further{more}, $Z_f(kG)\le\chi_f(kG)$. So let $f :V(kG)\to[0,\infty)$ 
be a color label with weight $\chi_f(kG)$. We create a new labeling $f'$ of $kG$ as follows: For each vertex $v$ in $kG$, let $f'(v)$ be the average of the label $f$ on all copies
of $v$ in $kG$. We note that summing $f'$ across all copies of $v$ yields the same value as
summing $f$ across all copies of $v$. Thus $w(f')=w(f)=\chi_f(G)$.

We claim that $f'$ is a color labeling. For suppose there exists an independent set of
vertices with the property that summing $f'$ across them yields a value greater than one.
Then in some copy of $G$ there must be a collection, say $S$, of independent vertices where $f'$, summed
over $S$, is greater than $1/k$. Let $S'$ be $S$ together with all copies of vertices from $S$ in
other copies of $G$. Note, $S'$ is an independent set in $kG$, but when we sum $f'$
across $S'$ we get a number larger than one. But summing $f$ over $S'$ we get the same sum. 
This contradiction implies that $f'$ is a color labeling of $kG$.

Next, we claim that $f'$ is in fact a cocolor labeling of $kG$. For suppose there is some clique in
$kG$ where $f'$ sums across the clique to a value larger than one. Then some vertex in the
clique must have a label greater than $1/k$. Taking all copies of this vertex produces an
independent set, say $J$, with the property that summing $f'$ across $J$ yields a value larger
than one, a contradiction. 

Thus, $f'$ is a cocolor labeling of $kG$ with $w(f')=\chi_f(kG)$, so that $Z_f(kG)\geq \chi_f(kG)$.
\end{proof}

Gr\"otschel, Lov\'asz and Schrijver~\cite{GLS} observed that for each fixed rational number $r>2$ determining whether a graph $G$ has $\chi_f(G)\le r$ is NP-complete
(see also~\cite{SU}, Theorem 3.9.2.) As the transformation from the $n$-vertex graph $G$ to $nG$ can be done in polynomial time Theorem~\ref{Thm E} implies the following.

\begin{corollary}\label{Cor E}
 For each fixed rational number $r>2$, the problem of deciding if a graph $G$ has $Z_f(G)\le r$ is NP-complete. 
\end{corollary}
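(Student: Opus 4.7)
The plan is to combine the cited Gr\"otschel--Lov\'asz--Schrijver NP-completeness of the decision problem ``$\chi_f(G)\le r$'' with Theorem~\ref{Thm E} via the polynomial-time reduction $G\mapsto nG$. Given an $n$-vertex input graph $G$, the graph $nG$ has $n^2$ vertices and $n|E(G)|$ edges, so it can certainly be constructed in polynomial time. Since $n\ge \omega(G)$, Theorem~\ref{Thm E} gives
$$Z_f(nG)=\chi_f(nG)=\chi_f(G),$$
so $\chi_f(G)\le r$ if and only if $Z_f(nG)\le r$. This immediately yields NP-hardness of the $Z_f$ decision problem at every fixed rational $r>2$.

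For membership in NP, the idea is that $Z_f(G)$ is the optimum of the linear program with nonnegative variables $x_S$ indexed by cliques and independent sets $S$ of $G$, subject to the $n$ covering constraints $\sum_{S\ni v}x_S\ge 1$ for each vertex $v$. Although this LP has exponentially many variables, it has only $n$ nontrivial constraints, so standard polyhedral theory guarantees an optimum basic feasible solution supported on at most $n$ sets with rational coordinates whose bit lengths are polynomially bounded in $n$. Such a solution serves as a succinct certificate: a verifier checks in polynomial time that each support set $S$ is indeed a clique or an independent set of $G$, that the covering inequalities are satisfied at every vertex, and that $\sum_S x_S\le r$.

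The main (and essentially only) obstacle is the careful accounting for NP-membership, which relies on the existence of a polynomial-size optimal basic solution to the covering LP. The hardness half is immediate from Theorem~\ref{Thm E} and the cited hardness of fractional coloring, and no further combinatorial work seems required.
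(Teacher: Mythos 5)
Your proof takes essentially the same approach as the paper: the NP-hardness half is exactly the paper's reduction $G\mapsto nG$ combined with Theorem~\ref{Thm E} and the cited Gr\"otschel--Lov\'asz--Schrijver hardness of $\chi_f$. Your NP-membership argument via a polynomial-size basic optimal solution of the covering LP is correct; the paper leaves this implicit, treating it as routine in the same way it is for $\chi_f$.
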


Example~\ref{Exm C} shows that Theorem~\ref{Thm A} can't be immediately extended to fractional
cocolorings, since for a bipartite graph $G$ we must have $\chi_f(G)=\chi(G)=2$, as long as it has edges.  
As our next result shows, in extending Theorem~\ref{Thm A}, these are our only troublesome cases.

\begin{theorem}\label{Thm D} 
If $G$ is a triangle-free graph not found in Example~\ref{Exm C}, then $\chi_f(G)=Z_f(G)$.
\end{theorem}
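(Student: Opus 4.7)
Plan.

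Since every fractional coloring is also a fractional cocoloring, $Z_f(G)\le\chi_f(G)$ holds trivially, so it suffices to prove $\chi_f(G)\le Z_f(G)$. By the Strong Duality Principle this is equivalent to exhibiting a cocolor labeling of weight $\chi_f(G)$. The plan is to start from an optimal color labeling $g$ (with $w(g)=\chi_f(G)$ and $g(I)\le 1$ for every independent set $I$) and, if necessary, transform it into a cocolor labeling of the same weight.

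The first key observation is that $S:=\{v:g(v)>1/2\}$ must induce a clique in $G$: any two non-adjacent vertices $u,w$ would form an independent set with $g(\{u,w\})>1$, contradicting $g(I)\le 1$. Since $G$ is triangle-free, $|S|\le 2$. When $|S|=0$ we automatically have $g(u)+g(v)\le 1$ on every edge, so $g$ is already a cocolor labeling and we are finished. The rest of the argument establishes that an optimal color labeling with $|S|=0$ exists whenever $G$ is triangle-free and not in Example~\ref{Exm C}.

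I would choose $g$ to minimize $T:=\max_v g(v)$ among all optimal color labelings, and suppose for contradiction that $T>1/2$. Pick $u$ with $g(u)=T$. The plan is to derive a contradiction by a weight-exchange: shifting a small $\varepsilon>0$ from $u$ to some $w\ne u$ with $g(w)<T$ preserves both the total weight and (for small $\varepsilon$) the maximum, and it preserves every independent-set constraint provided that every tight independent set (one with $g(I)=1$) containing $w$ also contains $u$. If such a $w$ exists the minimality of $T$ is violated, so we may assume otherwise: every vertex $w\ne u$ with $g(w)<T$ is separated from $u$ by some tight independent set. I would then argue that this blocking condition, combined with $g(u)>1/2$ and triangle-freeness, forces the nontrivial part of $G$ to be a star centered at $u$: the tight sets disjoint from $u$ jointly cover every remaining vertex, and the constraints $g(\{u\}\cup T')\le 1$ for independent sets $T'\subseteq V\setminus N[u]$ pin down the labeling so that $G$ must be $K_{1,t}$ or $K_{1,t}\cup K_s^c$, i.e.\ a graph in Example~\ref{Exm C}.

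The main obstacle is this last structural deduction, together with its symmetric counterpart in the case $|S|=2$ (the two heavy vertices being joined by an edge and behaving in tandem, which requires a parallel analysis of tight independent sets in $V\setminus N[u]$ and $V\setminus N[v]$). A preliminary simplification I would use throughout is that when $G$ has any edges, every optimal color labeling assigns weight $0$ to isolated vertices (otherwise an independent set containing a weighted isolate together with a suitable independent set in the rest of $G$ would exceed $1$), so isolated vertices can be peeled off and the argument applied to the resulting smaller graph, which remains triangle-free and not in Example~\ref{Exm C}.
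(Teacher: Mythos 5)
Your overall strategy is sound in outline and genuinely different from the paper's: the paper works on the primal side (take an optimal fractional cocoloring with minimal total weight on edges and show, via a sequence of exchange claims, that this weight is zero), whereas you work on the dual side (try to exhibit an optimal color labeling whose maximum value is at most $1/2$, which in a triangle-free graph is automatically a cocolor labeling since cliques have at most two vertices). The observation that $S=\{v:g(v)>1/2\}$ induces a clique, hence $|S|\le 2$, is a nice reduction. However, there are real gaps in the argument as written.

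The central difficulty is in your weight-exchange step. You state that shifting $\varepsilon$ from $u$ to $w$ ``preserves \ldots\ the maximum,'' and then claim ``the minimality of $T$ is violated.'' These two assertions contradict each other: if the maximum is preserved, minimality of $T$ is not violated. This is not just loose phrasing --- it points to a real issue. Consider $P_4$ with $g=(0,1,1,0)$: this is an optimal color labeling, the max is $1$, and shifting $\varepsilon$ from $b$ to $d$ is a legal exchange (the only tight set containing $d$ is $\{b,d\}$, which contains $b$), yet the new maximum is still $1$, achieved at $c$. To actually decrease $T$ here you must perform a coordinated exchange from \emph{both} max vertices simultaneously. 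Your sketch does not set this up (for instance by a lexicographic argument: first minimize $T$, then minimize $|\{v:g(v)=T\}|$, and show a single exchange reduces the count), nor does it handle the case when the two heavy vertices are adjacent with equal label, where again no single exchange lowers $T$.

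The second and larger gap is the structural deduction. Assuming every $w\neq u$ with $g(w)<T$ is ``blocked'' by a tight independent set avoiding $u$, you assert that this ``forces the nontrivial part of $G$ to be a star centered at $u$,'' but you give no argument for this. This is precisely where the real work lives: in the paper's proof it occupies Claims~\ref{ee}--\ref{star} plus the final paragraph, and in yours it would require showing, among other things, that every blocked vertex is either a neighbor of $u$ or isolated, and that the induced configuration on $V\setminus N[u]$ can have no edges. Without it, the proof does not go through. You also defer the $|S|=2$ sub-case entirely (``behaving in tandem, which requires a parallel analysis''), which again is exactly the part that needs a proof, especially since in the triangle-free setting a nonzero-weight edge between the two heavy vertices is the obstruction you are trying to eliminate. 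As it stands, this is a reasonable plan with the right high-level idea but with the two hardest steps left unproven.
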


\begin{proof}
Let $G$ be a triangle-free graph and consider a fractional cocoloring of minimum size, such that the total weight on $K_2$'s is minimal. If we can show that the weight on each $K_2$ must be zero, then it follows that $\chi_f(G)=Z_f(G)$.

\begin{claim}\label{ee}
There are no disjoint edges of positive weight.  
\end{claim}

Suppose $ab$ and $cd$ are disjoint edges with $w(ab)=\alpha$, $w(cd)=\beta$, where $\alpha\ge \beta>0$.
If there is an edge from $\{a,b\}$ to $\{c,d\}$, then we may assume that this edge is $ad$. Since $G$ is triangle-free it follows that we can always assume that $\{a,c\}$ and $\{b,d\}$ are independent sets. Reducing the weight of $ab$ and $cd$ by $\beta$ and adding $\beta$ to the weight of these independent sets we get a cocoloring that contradicts minimality. 

\begin{claim}\label{Ie}
Every edge of positive weight intersects every independent set of positive weight.
\end{claim}

Suppose $I$ is an independent set of weight $\alpha>0$ that is disjoint from the edge $ab$ of weight $\beta>0$.
Since $G$ is triangle-free we can partition $I$ into two sets $A,B$ such that $A'=A\cup\{a\}$ and $B'=B\cup\{b\}$ are independent sets.
Reducing the weights of $I$ and $ab$ by the smaller of $\alpha,\beta$ and increasing the weight of both $A',B'$ by the same amount, we again get a cocoloring that contradicts minimality.

\begin{claim}\label{noboth}
An edge of positive weight is not incident with edges at both of its endpoints.
\end{claim}

Suppose $w(ab)>0$ and we have edges $ac$ and $bd$. Since $G$ is triangle-free we have $c\neq  d$ and so by Claim~\ref{ee} these edges cannot both have positive weight, so say $w(ac)=0$. Now observe that there can be no edge incident with $c$ that has positive weight, since otherwise either we get a triangle, or a contradiction to Claim~\ref{ee}. Furthermore observe that by Claim~\ref{Ie} every independent set of positive weight containing $c$ must intersect $ab$, and thus contain $b$. Since the independent sets containing $c$ have total weight at least 1, the same can be said for $b$. Thus we can move all the positive weight from $ab$ to the independent set $\{a\}$, contradicting minimality. 

\begin{claim}\label{star}
If two edges  are incident then they  have the same weight.
\end{claim}

Suppose to the contrary that $vu,vw$ are incident edges with weights $\alpha,\beta$, resp.\ where $\alpha>\beta$ (and possibly $\beta=0$).  There is no other edge of positive weight incident with $w$ by Claim~\ref{ee} (for $uv$) and the fact that $G$ is triangle-free.
Moreover, every independent set of positive weight that contains $w$ may not contain $v$, and thus must contain $u$ by Claim~\ref{Ie}. Hence the total weight on the independent sets containing $w$ (and thus $u$) must be at least $1-\beta$ and we can move a weight of $\alpha-\beta$ from $uv$ to $\{v\}$, contradicting minimality. 

So it follows from Claims~\ref{ee} and~\ref{noboth}  that the edges of positive weight form a component $K_{1,t}$ of $G$ and from Claim~\ref{star} that every such edge has  the same weight $\alpha>0$. Let $v$ be the center of this star $K_{1,t}$.
  
It remains to show that there is no edge $ab$ in $G-K_{1,t}$. If there was such an edge then the vertices $a,b$ can only be covered by independent sets, and the independent sets containing $a$ must be distinct from those of $b$ and have total weight at least 1. For each independent set $I$ containing $a$: (i) consider the independent set $I'$ formed by removing the vertices of $K_{1,t}$ from $I$ and then adding $v$; and (ii) move all the weight from $I$ to $I'$. (If $I=I'$ then this means that the weight remains on $I$.)
For each independent set $S$ containing $b$: (i) consider the independent set $S'$ formed by removing the vertices of $K_{1,t}$ from $S$ and then adding $N(v)$; and (ii) move all the weight from $S$ to $S'$. The vertices of $K_{1,t}$ are now covered by  these independent sets,  and so we can remove all weight from the edges of $K_{1,t}$, thus  obtaining a contradiction to minimality. Therefore we know that $G=K_{1,t}+(n-t-1)K_1$ for $t\ge 1$. 
\end{proof}

\begin{remark}
Larsen, Propp and Ullman~\cite{LPU} proved that applying Mycielski's construction to a graph with fractional chromatic number $c$ yields a graph with fractional chromatic number $c+\frac 1c$. Fisher~\cite{Fi} used this to show that there are families of graphs $G$, such that the denominator of $\chi_f(G)$ grows exponentially in $|V(G)|$, whereas Chv\'atal, Garey and Johnson~\cite{CGJ} had shown that in general the denominator of $\chi_f(G)$ for an $n$-vertex graph $G$ can be no larger than $n^{n/2}$. Since $C_5$ is triangle-free and the Mycielskian of a triangle-free graph is triangle-free, the results of Fisher combine with Theorem~\ref{Thm D} to show that the denominator of $Z_f(G)$ can also grow exponentially in $|V(G)|$ 
\end{remark}

\begin{example}
The Kneser Graph $K_{a:b}$ has as vertex set all $b$-element subsets of an $a$-element set, where two vertices are adjacent if the sets are disjoint. Lov\'asz~\cite{Lov78} showed that $\chi(K_{a:b})=a-2b+2$.
Furthermore, it is not hard to show that $\chi_f(K_{a:b})=a/b$, using the fact that Kneser graphs are vertex-transitive (see~\cite{SU} page 45). 
Hence, $\chi(K_{3k-1:k})=k+1$ and $\chi_f(K_{3k-1:k})=3-1/k$. Since $K_{3k-1:k}$ is triangle-free this yields
graphs with fractional chromatic and (by Theorem~\ref{Thm D}) fractional cochromatic numbers close to three
having arbitrarily large chromatic and (by Theorem~\ref{Thm A}) cochromatic numbers.
\end{example}


\section{Probabilistic results}

Let $z(n)$ be the largest cochromatic number among all graphs with $n$ vertices, and let $G_{n,p}$ denote the random graph on $n$ labelled vertices with edge probability $p$. Erd\H os, Gimbel and Kratsch~\cite{EGK} proved that for sufficiently large $n$, $\frac n{2\log_2 n}<z(n)<(2+o(1))\frac n{\log_2 n}$, where the lower bound is given by $G_{n,1/2}$. (They also show that given an infinite family of graphs with cochromatic number $z$, there is a $c>0$ such that these graphs all have at least $cz^2\log_2^2(z)$ edges.) 
Remark~\ref{Rem 6} below shows that this result doesn't change if we consider fractional cocoloring instead of cocoloring.

Bollob\'as~\cite{Bol} showed that asymptotically almost surely (a.a.s.) $\chi(G_{n,1/2})\approx \frac n{2\log_2 n}$, and Matula~\cite{Mat} showed that a.a.s. $\alpha(G_{n,1/2})\approx 2 \log_2 n$ and $\omega(G_{n,1/2})\approx 2\log_2 n$.
Thus Proposition~\ref{trivial Zf} and $Z_f(G)\le \chi(G)$ imply the following.

\begin{remark}\label{Rem 6} 
The random graph $G_{n,1/2}$ asymptotically almost surely satisfies $Z_f(G_{n,1/2})\approx \frac n{2\log_2 n}$.
\end{remark}

As noted chromatic and cochromatic number can be very far apart. Considering
complete graphs, the fractional versions can also be far apart. Furthermore, every induced
subgraph of a clique has fractional cochromatic number equal to one. Dropping the
notion of induced, Theorem~\ref{subgraph thm} below shows that if a graph has large fractional chromatic number, it has a
subgraph with large fractional cochromatic number.

Alon, Krivelevich and Sudakov~\cite{AKS} settled a problem of Erd\"os and Gimbel, by proving that every graph of chromatic number $n$ contains a subgraph with cochromatic number at least $\Omega( n/ \log n)$. (The complete graph together with the value of $z(n)$ shows that this is tight, up to the constant factor.) We will prove a fractional analogue of this result using a very similar proof. 

\begin{theorem}\label{subgraph thm}
If $G$ is a graph with $\chi_f(G)=m$, then $G$ has a subgraph $H$ with $Z_f(H)\ge (\frac 14-o(1))\frac m{\log_2 m}$.
Furthermore, $G=K_m$ shows that this is tight up to a constant factor.
\end{theorem}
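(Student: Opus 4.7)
The proof plan mirrors the strategy of Alon--Krivelevich--Sudakov for the integer cochromatic number, adapted to fractional cocoloring. The starting point is the inequality $\chi_f(G)\le \omega(G)\cdot Z_f(G)$: from any fractional cocoloring of weight $Z_f(G)$, replace each clique $K$ of weight $w(K)$ by the $|K|$ singleton independent sets $\{v\}$ (one for each $v\in K$), each of weight $w(K)$. Every vertex is still covered with the same total weight, and the new total weight is at most $\omega(G)\cdot Z_f(G)$, yielding $Z_f(G)\ge m/\omega(G)$.

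I would then split into cases according to $\omega(G)$. If $\omega(G)$ is small --- say $\omega(G)\le 4\log_2 m$ --- the inequality above already gives $Z_f(G)\ge m/(4\log_2 m)$, and we take $H=G$. If $\omega(G)$ is comparably large --- say $\omega(G)\ge m/2$ --- pick a maximum clique $K$. Since $K$ is complete on $|K|$ vertices, every graph on $V(K)$ with an arbitrary edge set is a subgraph of $K\subseteq G$. By Remark~\ref{Rem 6} the random graph $G_{|K|,1/2}$ a.a.s.\ satisfies $Z_f\ge |K|/(2\log_2|K|)(1-o(1))$, so there is a specific graph $H$ on $V(K)$, realised as a subgraph of $K$, with $Z_f(H)\ge |K|/(2\log_2|K|)(1-o(1))\ge m/(4\log_2 m)(1-o(1))$.

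The delicate part I anticipate is the intermediate range $4\log_2 m<\omega(G)<m/2$, where neither $m/\omega(G)$ nor $\omega(G)/(2\log_2\omega(G))$ alone reaches $(1/4-o(1))m/\log_2 m$. The plan here is to iterate, removing a maximum clique $K$ and working in $G-K$: since every fractional coloring of $G$ can be obtained by combining a coloring of $G-K$ with the $\omega(G)$ singletons on $K$, one has $\chi_f(G-K)\ge m-\omega(G)\ge m/2$, so recursion applies until $\omega$ lands in one of the two extreme regimes. Managing this iteration --- in particular avoiding losses that would spoil the leading constant --- is the technical heart of the argument and is what ultimately produces the factor $1/4$.

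Finally, for tightness, take $G=K_m$. Every subgraph of $K_m$ is a graph on at most $m$ vertices, so its fractional cochromatic number is at most the maximum of $Z_f$ over all such graphs. The upper bound $z(n)\le(2+o(1))n/\log_2 n$ recalled in the paragraph preceding Remark~\ref{Rem 6}, together with $Z_f\le Z$, yields $\max_{H\subseteq K_m} Z_f(H)=O(m/\log_2 m)$, which matches the theorem's lower bound up to a constant factor.
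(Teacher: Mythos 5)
Your plan correctly recovers the two easy regimes, but the intermediate regime is a genuine gap, and the paper handles all regimes in one uniform argument that is quite different from what you describe.

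Your Case~1 ($\omega(G)\le 4\log_2 m$) and Case~2 ($\omega(G)\ge m/2$) are both fine: the inequality $\chi_f(G)\le \omega(G)\cdot Z_f(G)$ follows exactly as you say, and the clique-embedding argument via Remark~\ref{Rem 6} is correct since any graph on $V(K)$ is a (not necessarily induced) subgraph of $G$. But the trouble in the window $4\log_2 m<\omega(G)<m/2$ is not just a matter of ``managing losses''; the iteration you sketch provably cannot reach the stated bound. At each step you only know $m_{i+1}\ge m_i-\omega_i$, and $\omega$ is non-increasing; if $\omega$ stays roughly constant at some value $\omega^*$ with $4\log_2 m\ll\omega^*\ll m$, you do not leave the intermediate regime until $m_i$ has shrunk to $\Theta(\omega^*)$, at which point your two bounds only give $Z_f(H)\gtrsim \omega^*/\log\omega^*$. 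For $\omega^*\approx\sqrt{m\log m}$ this is $\Theta(\sqrt{m/\log m})$, polynomially smaller than $m/\log m$. In fact if one only uses the two bounds $m/\omega$ and $\omega/(2\log_2\omega)$, the worst case $\omega$ makes the maximum of the two equal to $\Theta(\sqrt{m/\log m})$ no matter how the case thresholds are chosen, so no reshuffling of the iteration will close the gap: a new idea is needed.

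The paper's proof avoids the case split on $\omega$ entirely. It fixes a fractional cocoloring of weight $\le\frac12 m/\log_2 m$, lets $V_1$ be the vertices receiving clique-weight at least $1/2$ and $G_1=G[V_1]$, shows $|V_1|<m^2$ and (by doubling the independent-set weights to cover $G-V_1$) that $\chi_f(G_1)\ge m-o(m)$, and then takes a \emph{random subgraph} $H$ of $G_1$ with edge probability $1/2$. By Lemma 2.2 of Alon--Krivelevich--Sudakov, a.a.s.\ every clique or independent set of $H$ spans a set with $\chi_{G_1}\le 4\log_2 m$ (here the bound $|V_1|<m^2$ is what makes the $4\log_2 m$ work out), so any fractional cocoloring of $H$ lifts to a fractional coloring of $G_1$ with weight inflated by at most $4\log_2 m$; hence $Z_f(H)\ge (m-o(m))/(4\log_2 m)$. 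The random subgraph step is the ingredient your sketch is missing, and it is precisely what handles the middle range of $\omega$ that defeats the pure clique-number dichotomy. Your tightness argument for $G=K_m$ is correct and matches the paper's.
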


\begin{proof}
If the fractional cochromatic number of $G$ is greater than $\frac 12 m/\log_2 m$, then we are done (set $H=G$).  So let $Z$ be a fractional cocoloring of $G$ of total weight at most $\frac 12 m/\log_2 m$. Let $V_1$ be the set of vertices $v$ such that $Z$ assigns a total weight of at least $1/2$ to the cliques containing $v$, and let $G_1$ be the subgraph of $G$ induced by $V_1$.

Every clique in $G$ has size at most $m$, as $\chi_f(G)=m$. Thus the weights of all the cliques used in $Z$ sums to at  least $\frac 12 |V_1|/m$, since each clique contributes to the weight of at most $m$ vertices of $V_1$. It follows that $\frac 12 |V_1|/m \le Z_f(G)\le \frac 12m/\log_2 m$, and thus $|V_1| < m^2$.
Doubling the $Z$-weights on all the independent sets will yield a fractional coloring that covers all vertices not in $V_1$, so  $\chi_f(G-V_1)\le 2Z_f(G)\leq  m/2\log_2 m$. Therefore
\[\chi_f(G_1)\geq\chi_f(G)-\chi_f(G-V_1)=m-o(m).\]

We finish the argument by picking a random subgraph $H$ of $G_1$ (rather than $G$) where each edge is chosen with probability $1/2$. The exact same argument as in the proof of Lemma 2.2 in~\cite{AKS} now shows that with probability approaching 1, the vertex-set of every clique or independent set in $H$ induces a subgraph of $G_1$ that has chromatic number at most $4 \log_2 m$ in $G_1$. Therefore, any fractional cocoloring of $H$ can be converted to a fractional coloring of $G_1$, where the total weight is increased at most by a factor of $4\log_2 m$.
So $H$ does not have a fractional cocoloring of total weight less than $\frac{m-o(m)}{4\log_2 m}$.
\end{proof}


\section{Fractional cocoloring on surfaces}

For a given surface $\Sigma$ and a graph parameter $f$, let $f(\Sigma)$ denote the maximum value $f(G)$ can obtain for any graph $G$ that is embedded on $\Sigma$.
Heawood~\cite{Hea} proved that if $S_g$ is an orientable surface with genus $g>0$, then $\chi(S_g)\le \frac 12(7+\sqrt{1+48g})=H(g)$.
Ringel and Youngs~\cite{RY} showed that in fact equality must hold, by finding embeddings of $K_{H(g)}$ on $S_g$. This combines with the 4 color theorem to show that $\chi_f(S_g)=\chi(S_g)=H(g)$ for all $g\ge 0$.

Straight~\cite{S1,S2} conjectured, and verified for small genus, that  $Z(S_g)$ is the maximum $n$ such that $K_1\cup K_2\cup\dots \cup K_n$ can be embedded on $S_g$. 
Gimbel~\cite{Gim} disproved this nondeterministically, and with Thomassen~\cite{GT} extended this to prove that $Z(S_g)=\Theta(\frac{\sqrt g}{\log g})$.
However, the smallest genus for which Straight's conjecture is false is not known. For the fractional cochromatic number we similarly obtain

\begin{corollary}
$Z_f(S_g)=\Theta(\frac{\sqrt g}{\log g})$.
\end{corollary}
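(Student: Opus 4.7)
The plan is to bracket $Z_f(S_g)$ between two bounds: an upper bound coming for free from the Gimbel--Thomassen theorem about $Z(S_g)$, and a lower bound extracted from Theorem~\ref{subgraph thm} applied to the complete graph that Ringel--Youngs embed on $S_g$.

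For the upper bound, I would simply observe that every cocoloring is a fractional cocoloring (with $0/1$ weights), so $Z_f(G)\le Z(G)$ for every $G$. Taking the maximum over all graphs embeddable on $S_g$ gives
\[Z_f(S_g)\le Z(S_g)=O\!\left(\frac{\sqrt g}{\log g}\right),\]
by the cited result of Gimbel and Thomassen~\cite{GT}.

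For the lower bound, I would use the fact that $K_{H(g)}$ embeds on $S_g$ by Ringel--Youngs, where $H(g)=\tfrac12(7+\sqrt{1+48g})=\Theta(\sqrt g)$. Since $\chi_f(K_{H(g)})=H(g)$, Theorem~\ref{subgraph thm} yields a subgraph $H$ of $K_{H(g)}$ with
\[Z_f(H)\ge\left(\tfrac14-o(1)\right)\frac{H(g)}{\log_2 H(g)}=\Omega\!\left(\frac{\sqrt g}{\log g}\right).\]
Because $H$ is a subgraph of $K_{H(g)}$, it inherits an embedding on $S_g$, so $Z_f(S_g)\ge Z_f(H)$, completing the lower bound.

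There is no real obstacle here: the content was done in Theorem~\ref{subgraph thm} and in the quoted Gimbel--Thomassen theorem. The only mild subtlety worth flagging in the write-up is that the parameter $f(\Sigma)$ is defined as a maximum over embedded graphs, and we use closure under taking subgraphs to pass from $K_{H(g)}$ to the random subgraph $H$ produced by Theorem~\ref{subgraph thm}; this is immediate since a subgraph of an embedded graph is embedded on the same surface.
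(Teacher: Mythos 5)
Your proof is correct and follows the paper's argument exactly: the upper bound via $Z_f\le Z$ and the Gimbel--Thomassen theorem, and the lower bound by applying Theorem~\ref{subgraph thm} to $K_{H(g)}$ embedded on $S_g$ via Ringel--Youngs. The subtlety you flag (subgraphs inherit the embedding) is indeed the only point to check, and you handle it correctly.
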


\begin{proof}
Ringel and Youngs~\cite{RY} proved that $G=K_{H(g)}$ embeds on $S_g$, and thus $\chi_f(G)=\Theta(\sqrt g)$. Therefore Theorem~\ref{subgraph thm} implies that $G$ contains a subgraph $H$ with $Z_f(H)\ge \Omega\left(\frac{\sqrt g}{\log g}\right)$. The upper bound follows, since $Z_f(S_g)\le Z(S_g)=\Theta(\frac{\sqrt g}{\log g})$.
\end{proof}


\section{Another bound}

Erd{\H o}s, Gimbel and Straight~\cite{EGS} showed that a graph with clique number $k-1$ satisfies $\chi(G)\le Z(G)+R(k,k)$, where $R(k,k)$ denotes the ordinary Ramsey number. In this section we prove an extension of this to fractional cocoloring.

\begin{theorem}\label{Ramsey bound}
For every graph $G$ with $k=\omega(G)+1$, $\chi_f(G)\le Z_f(G)+R(k,k)$.

\end{theorem}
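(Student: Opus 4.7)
My plan is to adapt to the fractional setting the argument of Erd\H{o}s, Gimbel, and Straight~\cite{EGS} for the corresponding integer statement $\chi(G) \le Z(G) + R(k,k)$.

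First, I would take an optimal fractional cocoloring of $G$ and write $C_f$ for the total weight assigned to cliques and $I_f$ for the total weight assigned to independent sets, so that $Z_f(G) = C_f + I_f$. Every clique in $G$ has at most $k-1$ vertices because $\omega(G) = k-1$, so for any positive-weight clique $C$ in the cocoloring one can replace its weight $y_C$ by weights $y_C$ on each of the $|C|$ singleton independent sets at its vertices. Performed for every clique in the support, this produces a fractional coloring of total weight at most $I_f + (k-1)C_f$, which already gives the crude bound $\chi_f(G) \le Z_f(G) + (k-2)C_f$; when $C_f$ is sufficiently small (say, at most $R(k,k)/(k-2)$ when $k\ge 4$; the cases $k\le 3$ being handled already by Theorem~\ref{Thm D} and triviality), this by itself implies the theorem.

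The substantive case is when $C_f$ is large. Here I would use Ramsey's theorem: since $G$ contains no $K_k$, any $R(k,k)$ vertices of $G$ contain an independent set of size $k$. Assuming (by choosing an extremal optimum) that the cocoloring has more than $R(k,k)$ cliques in its support, I would select a transversal consisting of one vertex per clique and apply Ramsey to obtain an independent $k$-set $J$ spanning $k$ of those cliques. A small common weight $\delta$ can then be transferred from those $k$ cliques to $J$: the vertex of each such clique that lies in $J$ remains fully covered, while the remaining $|C|-1$ vertices of each affected clique lose $\delta$ weight and must be re-covered by further singleton independent sets. A careful iteration of this Ramsey transfer, combined with a charging scheme that accounts for the $k$-for-$1$ saving each transfer produces, should show that the total extra weight needed to eliminate the clique part of the cocoloring is at most $R(k,k)$.

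The main obstacle, and the technical heart of the proof, is to make this iteration precise and to bound the accumulated extra weight sharply by $R(k,k)$ rather than by a weaker constant such as $(k-1)R(k,k)$ or $(k-2)R(k,k)$. I expect that either the charging argument sketched above, or an LP-duality reformulation showing that one can reduce an optimal dual solution of the fractional chromatic LP by total vertex-weight at most $R(k,k)$ in order to satisfy all clique constraints, will be needed to obtain the clean bound.
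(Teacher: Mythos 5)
Your high-level intuition is right---use the fact that any $R(k,k)$ vertices of a $K_k$-free graph contain an independent $k$-set, and leverage this to replace the clique part of the optimal fractional cocoloring by independent $k$-sets---but the mechanism you propose for carrying this out does not close, and you acknowledge this yourself in the final paragraph. The specific ``transfer'' move you sketch (shift weight $\delta$ off $k$ cliques onto a transversal independent $k$-set $J$, then re-cover the $\le k-2$ newly exposed vertices of each affected clique with singletons) actually \emph{increases} the total weight for $k\ge 3$: the net change per transfer is $-k\delta + \delta + k(k-2)\delta = (k^2-3k+1)\delta > 0$. So the charging scheme as described goes the wrong way, and the appeal to ``careful iteration'' or an unspecified LP-duality argument is where the real proof would have to live. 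The proposal therefore contains a genuine gap at exactly the step it identifies as the technical heart.

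The paper's proof uses a different and more direct device that you did not find: it never moves weight between the original cliques and independent sets. Instead it discretizes the ``clique coverage'' of each vertex into $n$ levels, putting $v$ into $V_i$ when the total clique weight on $v$ lies in $(\tfrac{i-1}{n}, \tfrac{i}{n}]$. It then builds a fresh collection of independent $k$-sets level by level: at level $i$ it repeatedly extracts an independent $k$-set from $V_i$ together with the fewer-than-$R(k,k)$ vertices left over from earlier levels, stopping when fewer than $R(k,k)$ vertices remain, and assigns each extracted $k$-set weight $i/n$. Because at most $R(k,k)$ vertices ever ride along to the next level, and the final leftovers (also fewer than $R(k,k)$) are covered by unit-weight singletons, the total extra weight beyond $Z_\alpha + Z_\omega$ telescopes to at most $R(k,k)$. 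The discretization is what lets the Ramsey extraction be applied once per level with a bounded carry-over, rather than requiring an iterated weight-shifting scheme with a delicate amortized analysis.

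Your crude preliminary bound $\chi_f(G) \le Z_f(G) + (k-2)C_f$ is correct and a reasonable observation, but the paper does not need a case split on the size of $C_f$: the level-by-level construction handles all cases uniformly.
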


\begin{proof}
Let $R=R(k,k)$ and $n=|V(G)|$. Consider a covering achieving $Z_f(G)$ and let $Z_\alpha$ be the total weight on the independent sets in this cover, and $Z_\omega$ the total weight on the cliques. For $1\le i\le n$ we will let $V_i=\{v\in V(G): \frac{i-1}n<$ total weight on cliques containing $v \le \frac in\}$. Thus $Z_\omega \ge \sum_{i=1}^n\frac{i-1}n \frac{|V_i|}{k}$.  

We will now remove all the cliques from the cover and for each $1\le i\le n$ replace them with $s_i$ independent sets of size $k$ with weight $\frac in$  as follows. If $|V_1|<R(k,k)=R$, then we let $s_1=0$, but otherwise (since $G$ has no cliques of size $k$) we can remove some $s_1$ independent sets of size $k$ from $V_1$ until we are left with less than $R$ vertices; we denote this number by $R_1=|V_1|-s_1k<R$. Now we include these $R_1$ vertices from $V_1$ in $V_2$ and remove some $s_2$ independent sets of size $k$ from this new set, until we obtain $R_2=|V_2|+R_1-s_2k<R$ remaining vertices. We continue in this manner to obtain $s_i$ independent sets of size $k$ that cover all but $R_i$ remaining vertices in the set obtained from $V_i$ by adding all uncovered vertices from $V_1,\dots,V_{i-1}$, where $R_i=|V_i|+R_{i-1}-s_ik<R$ (and we let $R_0=0$ for convenience).

We now obtain a new cover by giving the $s_i=\frac 1k(|V_i|+R_{i-1}-R_i)$ independent sets of size $k$ a weight of $i/n$, and covering the remaining $R_n<R$ vertices with independent sets of size 1.  By the definition of $V_i$, every $v\in V_i$ was covered by cliques of total weight at most $\frac{i}{n}$ and these are now replaced by independent sets of total weight at least $\frac{i}{n}$.  So we  have a fractional coloring, and thus
\begin{eqnarray*}
\chi_f(G) &\le& Z_\alpha+R_n+\sum_{i=1}^n\frac in \cdot \frac{|V_i|+R_{i-1}-R_i}k\le Z_\alpha+R_n+Z_\omega+\sum_{i=1}^n \frac{|V_i|+R_{i-1}}{nk}-\frac{R_n}k \\
& \le& Z_f(G)+R_n+\frac1k+\frac {R-1}k-\frac {R_n}k\le Z_f(G)+R.
\end{eqnarray*}
\end{proof}

\begin{remark}\label{rem:Xf-Zf}

Since 
$2^{k/2}\le R(k,k)\le 4^k$, this raises the question about the necessary size of the error term.  
Building on the idea of Theorem 4 in~\cite{EGS}, Erd{\H o}s and Gimbel~\cite{EG} show that in fact for every $\varepsilon >0$ and $k$ sufficiently large there is a graph $G$ with $\omega(G)<k$ and $\chi(G)\ge Z(G)+(2-2\varepsilon)^{k/2}$. The same graphs satisfy
$\chi_f(G)\ge Z_f(G)+(2-2\varepsilon)^{k/2}$ as well: For $p=\frac1{2-\varepsilon}$, it is shown that the random graph $G=G_{n,p}$ on $n=(2-2\varepsilon)^{k/2}$ vertices satisfies $\chi(G)-Z(G)\ge |V(G)|/\alpha(G)-\chi(G^c)\geq (2-2\varepsilon)^{k/2}$. But $\chi_f(G)-Z_f(G)\ge |V(G)|/\alpha(G)-\chi(G^c)$ is also valid.
\end{remark}


\section{Open questions}

Let $Z_f(n,m)$ be the maximum value $Z_f(G)$ can take over all graphs $G$ on $n$ vertices and $m$ edges ,and let 
Since $Z_f(G^c)=Z_f(G)$ it follows that $Z_f(n,m)$ is symmetric, in that $Z_f(n,m)=Z_f(n,\binom n2 -m)$.
$Z_f(n,m)$ achieves a minimum of 1 when $m=0$ or $\binom n2$, but the situation for the maximum is less obvious:

\begin{question}
Given $n$, for which $m$ is $Z_f(n,m)= Z_f(n)$?
\end{question}

 Remark~\ref{Rem 6} seems to suggest that $m\approx\frac 12\binom n2$.
In this light it makes sense to ask if $Z_f(n,m)$ is unimodal in $m$: 

\begin{question}
Is it true that if $m<\frac 12\binom n2$, then $Z_f(n,m)\le Z_f(n,m+1)$?
\end{question}

Determining the (fractional) cochromatic number of a graph is NP-hard. In~\cite{HKLRS} it is shown that for fixed $k$ the question if a perfect graph $G$ has $Z(G)\le k$ can be decided in time $O(n\log n)$. Is there a similar algorithm for fractional cocoloring? For graphs of bounded tree-width it is shown in~\cite{CKSS} that the cochromatic number can be found in polynomial time. Does this also extend to fractional cocoloring?

What can we say about the structure of critically fractionally cochromatic graphs -- that is, graphs with the property that the removal of any vertex necessarily reduces the fractional cochromatic number? See also~\cite{BB,Jor,Ou}.

In Theorem~\ref{Thm D} it is shown that $\chi_f(G) - Z_f(G)\le 1$ when $G$ is triangle-free.
How large can this difference be when $G$ is $K_4$-free, or more generally $K_k$-free for some fixed $k$?
As described above (see Remark~\ref{rem:Xf-Zf}), the graphs from~\cite{EG} show that the difference can grow exponentially in $k$ both for the fractional and integral versions.   
We don't know how much bigger it can be for $\chi_f(G)-Z_f(G)$ or for $\chi(G)-Z(G)$.
Theorem~\ref{Ramsey bound} and~\cite{EGS} show that these differences grow at most exponentially in $k$. More detailed information could be challenging to obtain.

Given a finite set of graphs $\F$ we say that a graph is $\F$-free if it does not have an induced subgraph isomorphic to a member of $\F$.
Gy\'arf\'as~\cite{Gya} and Sumner~\cite{Sum} independently conjectured that for any fixed tree $T$ and any fixed integer $k$, the
family of $\{T, K_k\}$-free graphs has bounded chromatic number. (This is best possible, since Erd\"os proved that there are graphs $G$ of arbitrarily large girth and large ratio $|V(G)|/\alpha(G)$.) 
Seymour and Chudnovsky~\cite{CS, Chu} prove that the Gy\'arf\'as-Sumner conjecture is equivalent to showing that the family of $\F$-free graphs has bounded cochromatic number if and only if $\F$ contains a forest, a complement of a forest, a complete multipartite graph and the complement of a complete multipartite graph. It would be interesting to study fractional versions of these two conjectures.

\bibliographystyle{siam}
\bibliography{fractionalCocoloring}

\end{document}